\numberwithin{theorem}{section} 
\newtheorem{mydef}{Definition}
\begin{document}

\title{Log-Concavity of Multinomial Likelihood Functions\\ Under Interval Censoring Constraints\\ on Frequencies or Their Partial Sums}

\author{\name Bruce Levin \email bl6@columbia.edu \\
       \addr Department of Biostatistics\\
       Mailman School of Public Health \\
       Columbia University\\
       New York, NY 10032,  USA
       \AND
       \name Erik Learned-Miller \email elm@cs.umass.edu \\
       \addr Manning College of Information and Computer Sciences\\
       University of Massachusetts, Amherst\\
       Amherst, MA, 01002, USA}

\maketitle

\begin{abstract}
We show that the likelihood function for a multinomial vector observed under arbitrary interval censoring constraints on the frequencies or their partial sums is completely log-concave by proving that the constrained sample spaces comprise M-convex subsets of the discrete simplex.
\end{abstract}

\noindent {\bf Keywords:} Interval censoring; log-concavity; Lorentzian polynomials; M-convex subsets; multinomial distribution; partial sum rectangles

\section{Introduction}
The theory of log-concavity has many applications in statistics and probability \cite[see, e.g.,][]{SaumardWellner2014}. The theory of Lorentzian polynomials as presented comprehensively by \citet{BrandanHuh2020} offers powerful tools for establishing log-concavity of homogeneous polynomials in several variables, such as the likelihood function from multinomial samples. In this note we establish log-concavity of the multinomial likelihood function when sampling is subject to one of two types of constraints: (a) arbitrary interval constraints on the components of the multinomial frequencies themselves; or (b) arbitrary interval constraints on the partial sums of those frequencies in a pre-specified ordering. The former arises naturally with interval-censoring arising due to quota sampling, for example, while the latter arises in a novel nonparametric confidence procedure for the mean of a non-negative distribution (see forthcoming work by second author). Our focus here will be on establishing the M-convexity of the constrained sample space under constraints of type (a) or (b). Theorems established by \citet{BrandanHuh2020} then imply the strong and complete log-concavity of the respective likelihood functions. We are pleased to bring their work to recognition in the statistics and probability community.

\section{Preliminaries and notation}
For given integers $m \geq 2$ and $n \geq 2$, let 
$$\Omega_{m, n}=\left\{x \in\{0, \ldots, n\}^{m}: x_{1}+\cdots+x_{m}=n\right\}$$
be the discrete simplex in $m$ variables of order $n$ and let 
$$\Delta_{m}=\left\{p \in \mathfrak{R}_{\geq 0}^{m}: p_{1}+\cdots+p_{m}=1\right\}$$
be the continuous simplex of dimension $m-1$. For $p \in \Delta_{m}$, let $X$ have a multinomial distribution with index $n$ and parameter $p$, $X \sim \operatorname{Mult}_{m}(n, p)$, with sample space $\Omega_{m, n}$. For arbitrary discrete intervals $R_{j}=\left\{l_{j}, \ldots, u_{j}\right\}$ with pre-specified integers $0 \leq l_{j} \leq u_{j} \leq n$, let $R=R(l, u)=\left(R_{1} \times \cdots \times R_{m}\right) \cap \Omega_{m, n} \quad$ where $\quad l=\left(l_{1}, \ldots, l_{m}\right) \quad$ and $u=\left(u_{1}, \ldots, u_{m}\right)$. We refer to $R$ as a rectangular subset of $\Omega_{m, n}$, even though $R$ does not have a rectangular geometric appearance when displayed in barycentric coordinates. When regarded as a function of $p$, the multinomial likelihood function for the rectangular event $[X \in R]$ is
\begin{eqnarray}
L(p \mid R)&=&P[X \in R \mid p]=\sum_{x \in R}{\binom{n}{x}} p^{x} \nonumber \\
\label{eq:likelihood}
&=&\sum_{x_{1}=l_{1}}^{u_{1}} \cdots \sum_{x_{m}=l_{m}}^{u_{m}} \frac{n !}{x_{1} ! \cdots x_{m} !} \prod_{j=1}^{m} p_{j}^{x_{j}}.
\end{eqnarray}
Inferences about $p$ may be based on (\ref{eq:likelihood}) when the multinomial frequencies are interval-censored by $R$. In (\ref{eq:likelihood}) we have used  ${\binom{n}{x}}=\frac{n !}{x_{1} ! \cdots x_{m} !}$ for the multinomial coefficient and $p^{x}=\prod_{j=1}^{m} p_{j}^{x_{j}}$ for $x \in \Omega_{m, n}$. The sums are equivalent since the multinomial coefficient is zero unless $x \in \Omega_{m, n}$.

Also for $x \in \Omega_{m, n}$, let $S_{k}=S_{k}(x)$ denote the partial sums $S_{k}(x)=x_{1}+\cdots+x_{k}$ for $k=1, \ldots, m-1$ in a given ordering of multinomial categories. Now for given integer $(m-1)$-vectors $l=\left(l_{1}, \ldots, l_{m-1}\right)$ and $u=\left(u_{1}, \ldots, u_{m-1}\right)$ with $0 \leq l_{1} \leq \cdots \leq l_{m-1} \leq n$, $0 \leq u_{1} \leq \cdots \leq u_{m-1} \leq n$, and $l_{k} \leq u_{k} \; 
 (k=1, \ldots, m-1)$, we define another subset of $\Omega_{m, n}$ which we call a partial sum rectangle, namely, $W=W_{m, n}(l, u)=\left\{x \in \Omega_{m, n}: l_{k} \leq S_{k} \leq u_{k}\right.$ for $\left.k=1, \ldots, m-1\right\}$. The likelihood function for $p$ given the partial sum rectangular event $\left[X \in W\right]$ is now
\begin{equation}
\label{eq:partialSum}
L(p \mid W)=P[X \in W \mid p]=\sum_{x \in W}{\binom{n}{x}} p^{x},
\end{equation}
though we cannot further expand the sum as on the right-hand side of (\ref{eq:likelihood}).

Rectangles and partial sum rectangles agree for $m=2$  but not for $m\geq 3$.  When $m=2$, the partial sum rectangle defined by $l_1\leq S_1 \leq u_1$ is just the rectangle $R=\{ l_1',...,u_1'\} \times \{ l_2',...,u_2' \}$  with  $l_1'=l_1$, $u_1'=u_1$, $l_2'=n-u_1$, and $u_2'=n-l_1$. 
For $m=3$, every partial sum rectangle is a rectangle but not conversely, i.e., there are rectangles $R$ such that any partial sum rectangle $W\supseteq R$  implies $W\supset R$.  It is not difficult to see that the partial sum rectangle defined by $l_j \leq S_j \leq u_j$ for $j=1,2$ agrees with the rectangle $R=\{l_1,...,u_1\}\times \{(l_2-u_1)^+,..., u_2-l_1\}\times\{n-u_2,...,n-l_2\}$.  However, for a partial sum rectangle $W=W_{m,n}(l,u)$ to contain a given rectangle $R=R_{m,n}(l',u')$, we must have $l_2\leq \min \{x_1+x_2: x\in R\}$  and $u_2\geq \max \{x_1+x_2: x\in  R\}$, but then $W$ may contain other $x\notin R$.  For example, with $m=3$, $n=8$, and $R$ given by $l'=(1,2,2)$ and $u'=(3,4,4)$, the minimum $S_2$ is $4$ and the maximum $S_2$  is $6$, so we must have $l_1=1$, $u_1=3$, $l_2\leq 4$, and $u_2\geq 6$, but any such $W$ admits the points $(3,1,4)$ and $(1,5,2) \notin R$.  

When $m>3$, there are also partial sum rectangles $W$ such that any rectangle $R\supseteq W$  implies $R\supset W$.  This is because for a rectangle  $R=R_{m,n}(l',u')$ to contain a given partial sum rectangle $W=W_{m,n}(l,u)$, we must have $l_j' \leq \min \{x_j: x\in W\}$ and $u_j'\geq \max\{x_j:x\in W\}$ for $j=1,...,m$, but then $R$ may contain other $x\notin W$.  
For example, with $m=4$, $n=5$, and $W$ given by $l=(2,3,4)$  and $u=(3,4,5)$, the minimum and maximum components are $(2,0,0,0)$  and $(3,2,2,1)$, respectively.  Taking these as $l'$  and $u'$, the rectangle $R_{m,n}(l',u')$  contains all points in $W$ but also the points $x=(2,0,2,1)$  and $x=(3,2,0,0)$  which violate the partial sum constraints.  Thus, rectangles and partial sum rectangles comprise different collections of subsets for $m>2$.

\citet{BrandanHuh2020} analyze a family of homogeneous polynomials of degree $n$ in $m$ variables $w_1,...,w_m$  which they call Lorentzian.  Below we briefly present their main results connecting Lorentzian polynomials with strongly log-concave and completely log-concave polynomials.  Note that other notions of log-concavity, such as log-concavity or ultra-log-concavity for discrete distributions or sequences are only tangentially related for our purposes, so will not be discussed here.  See \citet{BrandanHuh2020} or \citet{SaumardWellner2014} for those other notions.

The family of \textit{strictly} Lorentzian polynomials is given in Definition 2.1 of \citet{BrandanHuh2020} as homogeneous polynomials $f(w_1,...,w_m)$ of degree $n\geq 0$ with all \textit{positive} coefficients that satisfy the following recursive property: for $n=0$ or $n=1$, no further conditions; for $n=2$, the Hessian matrix $\{\partial f / \partial w_i \partial w_j\}$  for $i,j = 1,...,m$   is non-singular and has exactly one positive eigenvalue; and for $n>2$, the partial derivatives $\partial f/\partial w_i$  must be strictly Lorentzian of degree $n-1$  for each $i=1,...,m$.  [Our notation differs slightly from that of \citet{BrandanHuh2020}—for our number of variables $m$, they use $n$ and for our degree $n$, they use $d$.]  They then define Lorentzian polynomials as limits of strictly Lorentzian polynomials, which permits some monomial coefficients to be zero.  In a mathematical tour de force, they then prove the following equivalences (a)-(c), which characterize the Lorentzian polynomials quite nicely.  

(a) For an arbitrary $m$-vector with non-negative integer components, say $\gamma=(\gamma_1,...,\gamma_m)$, let   $\partial^\gamma = \partial^{\gamma_1+...+\gamma_m}/\partial w_1^{\gamma_1}...\partial w_m^{\gamma_m}$ be the mixed-derivative operator.  A polynomial $f$ in $m$ variables with non-negative coefficients is said to be \textit{strongly log-concave} if $\partial^\gamma f$  is identically zero or log-concave on the positive orthant $\{w_1>0,...,w_m>0\}$  for all  $\gamma$ \citep{Gurvits2009}.  \textit{Then a degree $n$ homogeneous polynomial is Lorentzian if and only if it is strongly log-concave} \citep[Theorem 2.30]{BrandanHuh2020}.

(b) For a set of $m$-vectors $a_i=(a_{i1},...,a_{im})$  with non-negative components, let $D_i$  be the differential operator $D_i=\sum_{j=1}^m a_{ij}\; \partial / \partial w_j$   for $i=1,...,k$.  A polynomial $f$ in $m$ variables is said to be \textit{completely log-concave} if $f$ is log-concave and $D_1...D_k f$  is non-negative and log-concave on the positive orthant for any $k\geq 1$  and any $a_1,...,a_k$  (\citet{anari2018logconcave}).  \textit{Then a homogeneous polynomial of degree $n$ is Lorentzian if and only if it is completely log-concave} \citep[Theorem 2.30]{BrandanHuh2020}.

(c) Let $f(w)=\sum_\gamma c_\gamma w^\gamma$   be a polynomial in $m$ variables with non-negative coefficients  $c_\gamma$, where $w^\gamma = w_1^{\gamma_1}... w_m^{\gamma_m}$.  The \textit{support} of $f$ is defined as the subset $\{\gamma: c_\gamma > 0\}$ of monomials with positive coefficients.  Also, a subset $C\subseteq\Omega_{m,n}$  is said to be \textit{M-convex} if it satisfies the following “exchangeability” condition: for any $\alpha,\beta\in C$  and any index $i$ satisfying $\alpha_i>\beta_i$, there is an index $j$ satisfying  $\alpha_j<\beta_j$ and $\alpha-e_i+e_j\in C$, where $e_i$  and $e_j$   are the standard unit vectors in $\mathfrak{R}^m$.  There are other equivalent conditions; see, e.g., \citet{Murota2003} or \citet{BrandanHuh2020}, p.9.  The stated condition is most convenient for our purposes.  \textit{Then a degree $n$ polynomial $f$ with non-negative coefficients is Lorentzian if and only if the support of $f$  is M-convex and the Hessian of $\partial^\gamma f$  has at most one positive eigenvalue for every $\gamma\in \Omega_{m,n-2}$} \citep[p. 22]{BrandanHuh2020}.

Our interest in log-concavity for likelihood functions (1) and (2) arises for two reasons.  First, it ensures that (1) is unimodal in $p$, which greatly simplifies maximum likelihood estimation of $p$ under interval censoring constraints.  Second, log-concavity of (2) greatly simplifies the task of locating worst-case error sets which are used to guarantee coverage probabilities in the above-mentioned non-parametric confidence procedure.  The statistically interesting and insightful Theorem 3.10 of \citet{BrandanHuh2020} shows that \textit{(1) and (2) are Lorentzian polynomials of order $n$ for $p\in \Delta_m$  if and only if $R$ or $W$, respectively, are M-convex subsets}.  As indicated above, this implies both the strong and complete log-concavity of (1) and (2).  In our Theorem 3.1 below we show that any rectangle $R_{m,n}(l,u)$  is M-convex and in Theorem 3.2 we show that any partial sum rectangle $W_{m,n}(l,u)$  is M-convex.

\section{Proofs of M-convexity}

\begin{theorem}
\label{thm:rectangle}
Any rectangle $R=R(l, u) \subseteq \Omega_{m, n}$ is $M$-convex.
\end{theorem}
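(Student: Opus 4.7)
The plan is a direct verification of the M-convexity exchange property. Fix any $\alpha,\beta\in R$ and any index $i$ with $\alpha_i>\beta_i$. First I would observe, by a simple pigeonhole argument on the defining identity $\sum_k\alpha_k=\sum_k\beta_k=n$, that there must exist some index $j$ with $\alpha_j<\beta_j$; otherwise $\alpha_k\ge\beta_k$ for all $k$ while $\alpha_i>\beta_i$ would force $\sum_k\alpha_k>\sum_k\beta_k$.

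Next I would claim that \emph{any} such $j$ works, i.e., that $\alpha'=\alpha-e_i+e_j$ lies in $R$. To confirm this, I check three things. Membership in $\Omega_{m,n}$: the vector $-e_i+e_j$ has coordinate sum zero, so $\sum_k\alpha'_k=n$, and the components remain non-negative integers (component $i$ decreases to $\alpha_i-1\ge\beta_i\ge l_i\ge 0$ and component $j$ increases to $\alpha_j+1$). Component-wise bounds for index $i$: from $\alpha_i\le u_i$ we have $\alpha'_i=\alpha_i-1\le u_i$, and from $\alpha_i>\beta_i\ge l_i$ we have $\alpha'_i=\alpha_i-1\ge l_i$. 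Component-wise bounds for index $j$: from $\alpha_j\ge l_j$ we get $\alpha'_j=\alpha_j+1\ge l_j$, and from $\alpha_j<\beta_j\le u_j$ we get $\alpha'_j=\alpha_j+1\le u_j$. All other components of $\alpha$ are unchanged and so automatically satisfy their interval constraints.

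Because the defining constraints of $R$ decouple across coordinates, no obstruction can arise: swapping a unit of mass from coordinate $i$ (where we have slack on the lower side, since $\alpha_i$ strictly exceeds $\beta_i$ and hence $l_i$) to coordinate $j$ (where we have slack on the upper side, since $\alpha_j$ is strictly less than $\beta_j$ and hence $u_j$) is always feasible. The key structural fact exploited is the separability of the constraints, which is exactly what fails for partial sum rectangles and makes Theorem~\ref{thm:rectangle} far easier than its partial-sum counterpart; I anticipate no real obstacle here beyond cleanly recording the four inequalities above.
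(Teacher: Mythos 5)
Your proof is correct and follows essentially the same route as the paper's: use the sum-to-$n$ identity to produce some $j$ with $\alpha_j<\beta_j$, then check that the four coordinate inequalities for $\alpha'$ at indices $i$ and $j$ are immediate consequences of $\beta_i\ge l_i$, $\alpha_i\le u_i$, $\alpha_j\ge l_j$, and $\beta_j\le u_j$, with all other coordinates untouched. The paper likewise notes that \emph{any} such $j$ works, so the two arguments coincide in substance as well as structure.
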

\begin{proof}
Suppose $\alpha, \beta \in R$ and index $i$ satisfies $\alpha_{i}>\beta_{i}$. Then we must have $l_{i}<\alpha_{i}$, for if not, then $\alpha \in R$ implies $\alpha_{i}=l_{i}$, but then $\beta \in R$ implies $\beta_{i} \geq l_{i}=\alpha_{i}$, a contradiction. Then we already have $l_{i} \leq \alpha_{i}-1 \leq u_{i}$. Now we claim that there must be another index $j \neq i$ such that $\alpha_{j}<\beta_{j}$. For if not, then $\alpha,\beta \in R \subset \Omega_{m,n}$ implies $n=\sum_{k\neq i}\beta_k + \beta_i\leq \sum_{k\neq i} \alpha_k + \beta_i < \sum_{k\neq i}\alpha_k + \alpha_i=n,$ a contradiction. So there exists $j\neq i$ such that $\alpha_j < \beta_j \leq u_j$
and for any such $j$ we have $l_{j} \leq \alpha_{j}+1 \leq u_{i}$, whence $\alpha-e_{i}+e_{j} \in R$.
\end{proof}
The proof of M-convexity for partial sum rectangles is rather more subtle. We shall find the following definition useful.

\begin{mydef}
\label{def:feasible}
    Let $W=W_{m, n}(l, u)$ be a given partial sum rectangle, let $\alpha, \beta \in W$, and let $i$ be such that $\alpha_{i}>\beta_{i}$. An index $j \in\{1, \ldots, m\}$ is feasible if
\begin{enumerate}[label=(\roman*)]
\item $j<i$ and $\alpha_{j}<\beta_{j}$ with $S_{k}(\alpha)<u_{k}$ for each $j \leq k<i$, or
\item $j>i$ and $\alpha_{j}<\beta_{j}$ with $S_{k}(\alpha)>l_{k}$ for each $i \leq k<j$.
\end{enumerate}
\end{mydef}
Feasible indices are those for which it is possible for $\alpha^{\prime}=\alpha-e_{i}+e_{j}$ to satisfy the requirements for $M$ convexity of $W$.

\begin{theorem}
\label{thm:partialSUm}
Any partial sum rectangle $W=W_{m, n}(l, u) \subseteq \Omega_{m, n}$ is $M$-convex.
\end{theorem}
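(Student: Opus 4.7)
My plan is to argue by contradiction using the feasibility dichotomy already isolated in Definition 3.2: assuming that no feasible index $j$ exists, I will produce two incompatible estimates of the gap $S_{k_1}(\alpha)-S_{k_0}(\alpha)$ versus $S_{k_1}(\beta)-S_{k_0}(\beta)$ for a judicious pair $k_0<i\leq k_1$, one forced by the partial-sum constraints defining $W$ and the other by the coordinatewise inequalities between $\alpha$ and $\beta$.

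The setup locates the nearest obstructions to feasibility on either side of $i$. I would let $k_0=\max\{k: 1\leq k<i,\ S_k(\alpha)=u_k\}$, with the convention $k_0=0$ if that set is empty, and $k_1=\min\{k: i\leq k<m,\ S_k(\alpha)=l_k\}$, with $k_1=m$ if that set is empty. By construction, every $k$ with $k_0<k<i$ satisfies $S_k(\alpha)<u_k$, so any $j$ with $k_0<j<i$ and $\alpha_j<\beta_j$ would fulfill condition (i) of Definition 3.2. Symmetrically, every $k$ with $i\leq k<k_1$ satisfies $S_k(\alpha)>l_k$, so any $j$ with $i<j\leq k_1$ and $\alpha_j<\beta_j$ would fulfill condition (ii). The hypothesis that no feasible $j$ exists therefore forces $\alpha_j\geq\beta_j$ for every $j\in\{k_0+1,\ldots,i-1\}\cup\{i+1,\ldots,k_1\}$.

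Combining these weak inequalities with the strict $\alpha_i>\beta_i$ and summing over $j=k_0+1,\ldots,k_1$ yields
\[
S_{k_1}(\alpha)-S_{k_0}(\alpha)\;>\;S_{k_1}(\beta)-S_{k_0}(\beta).
\]
On the other hand, the definitions of $k_0$ and $k_1$ pin $S_{k_0}(\alpha)=u_{k_0}$ and $S_{k_1}(\alpha)=l_{k_1}$ (or invoke $S_0\equiv 0$ and $S_m\equiv n$ in the boundary cases $k_0=0$ or $k_1=m$), while $\beta\in W$ gives $S_{k_0}(\beta)\leq u_{k_0}$ and $S_{k_1}(\beta)\geq l_{k_1}$. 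Hence $S_{k_0}(\alpha)\geq S_{k_0}(\beta)$ and $S_{k_1}(\alpha)\leq S_{k_1}(\beta)$, which produces the reverse inequality $S_{k_1}(\alpha)-S_{k_0}(\alpha)\leq S_{k_1}(\beta)-S_{k_0}(\beta)$ and completes the contradiction.

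The main obstacle relative to the rectangle case in Theorem 3.1 is that exchanging a unit between coordinates $i$ and $j$ perturbs every intermediate partial sum $S_k$ with $k$ strictly between them, so feasibility is genuinely non-local and no single-coordinate counting argument closes. The essential insight I would rely on is that the nearest \emph{saturated} partial sums of $\alpha$ on each side of $i$ play a dual role: they are exactly the barriers that would defeat feasibility in Definition 3.2, and they are simultaneously the indices at which the partial sums of $\alpha$ and $\beta$ are forced into a direction that is incompatible with the coordinatewise inequalities accumulated across $[k_0+1,k_1]$. The only care in writing up the argument is the uniform treatment of the boundary cases $k_0=0$ and $k_1=m$, handled cleanly by the conventions $S_0\equiv 0$ and $S_m\equiv n$ on all of $\Omega_{m,n}$.
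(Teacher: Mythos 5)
Your argument is correct, and it takes a genuinely different and in some ways cleaner route than the paper's. The paper proves Lemma~\ref{lem:feasibleIndex} by a three-way case split on $i=1$, $i=m$, and $1<i<m$, with the last case requiring a chain of six inductive consequences (C1)--(C6). Your barrier construction---$k_0$ as the nearest index below $i$ where $S_k(\alpha)$ is saturated at $u_k$, $k_1$ as the nearest index at or above $i$ where it is saturated at $l_k$, with the conventions $S_0\equiv 0$, $S_m\equiv n$---subsumes all three cases at once and replaces the partial-sum induction with a single telescoping comparison across $[k_0+1,k_1]$. The key observation that the saturation barriers pin $S_{k_0}(\alpha)\geq S_{k_0}(\beta)$ and $S_{k_1}(\alpha)\leq S_{k_1}(\beta)$, directly contradicting the coordinatewise estimate $\sum_{j=k_0+1}^{k_1}\alpha_j>\sum_{j=k_0+1}^{k_1}\beta_j$, is exactly the content the paper extracts more laboriously from (C1)--(C6). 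This is a nice unification, and worth preferring.

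One thing to supply explicitly before the proof is complete: you have shown that a feasible index $j$ (in the sense of Definition~\ref{def:feasible}) must exist, but M-convexity requires that $\alpha'=\alpha-e_i+e_j$ actually lies in $W$---this is the content of the paper's Lemma~\ref{lem:f2}. Fortunately that step is immediate from the definition of feasibility and should be recorded: say $j>i$ is feasible, so $\alpha_j<\beta_j$ and $S_k(\alpha)>l_k$ for $i\leq k<j$. Then $S_k(\alpha')=S_k(\alpha)$ for $k<i$ or $k\geq j$, while $S_k(\alpha')=S_k(\alpha)-1$ for $i\leq k<j$; the upper bounds $S_k(\alpha')\leq u_k$ are inherited from $\alpha\in W$, and the lower bounds $S_k(\alpha')\geq l_k$ hold for $i\leq k<j$ precisely because feasibility gives $S_k(\alpha)>l_k$ there. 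The case $j<i$ is symmetric. This also shows that, unlike in the paper's Lemma~\ref{lem:f2}, you need not single out the extremal feasible index: \emph{any} feasible $j$ yields $\alpha'\in W$.
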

\begin{proof} 
To establish the theorem, we demonstrate two lemmas. The first shows that at least one feasible index exists and the second shows that for an appropriately selected feasible index $j$, $\alpha^{\prime}=\alpha-e_{i}+e_{j} \in W$. 

\begin{lemma}
\label{lem:feasibleIndex}    For any partial sum rectangle $W=W(l, u) \subseteq \Omega_{m, n}$ with $\alpha, \beta \in W$ and $i \in\{1, \ldots, m\}$ such that $\alpha_{i}>\beta_{i}$, there exists a feasible index $j$.
\end{lemma}
\begin{proof}
We first consider the simpler boundary cases, (a) $i=1$ and (b) $i=m$, and conclude with (c) $1<i<m$.

\textbf{(a)} Suppose $i=1$. There must exist an index $j$ with $1<j \leq m$ such that $\alpha_{j}<\beta_{j}$, for if $\alpha_{k} \geq \beta_{k}$ for each $1<k \leq m$, then $\alpha_{i}>\beta_{i}$ implies $n=\alpha_{1}+\cdots+\alpha_{m}>\beta_{1}+\cdots+\beta_{m}=n$, contradiction. If $j=2$ is among those indices with $\alpha_{j}<\beta_{j}$, we are done as $j=2$ already satisfies feasibility Definition~(\ref{def:feasible})(ii), because we must have $S_{1}(\alpha)>l_{1}$, else $\alpha_{1}=S_{1}(\alpha)=l_{1} \leq S_{1}(\beta)=\beta_{1}$, contradiction. So suppose $j>2$ and consider the least such index, $j=\min \left\{k>1: \alpha_{k}<\beta_{k}\right\}$. Since $j>1$ is least, we have $\alpha_{k} \geq \beta_{k}$ for $1<k<j$. Then $\alpha_{i}>\beta_{i}$ implies $S_{k}(\alpha)>S_{k}(\beta) \geq l_{k}$ since $\beta \in W$. Thus, $\alpha_{j}<\beta_{j}$ with $S_{k}(\alpha)>l_{k}$ for $i \leq k<j$, so $j$ is feasible under Definition~(\ref{def:feasible})(ii).

\textbf{(b) }Next, suppose $i=m$. There must exist an index $j$ with $1 \leq j<m$ such that $\alpha_{j}<\beta_{j}$, for the same reason as in (a). If $j=m-1$ is among those indices with $\alpha_{j}<\beta_{j}$, we are done as $j=m-1$ already satisfies feasibility Definition~(\ref{def:feasible})(i), because we must have $S_{m-1}(\alpha)<u_{1}$, else we have the contradiction $\alpha_{m}=n-S_{m-1}(\alpha)=n-u_{m-1} \leq n-S_{m-1}(\beta)=\beta_{m}$. So suppose $j<m-1$ and consider the greatest such index, $j=\max \left\{k<m: \alpha_{k}<\beta_{k}\right\}$. Since $j<m$ is greatest, we have $\alpha_{k} \geq \beta_{k}$ for $j<k<m$. Then $\alpha_{m}>\beta_{m}$ implies $S_{k}(\alpha)=n-\left(\alpha_{k+1}+\cdots+\alpha_{m}\right)<n-\left(\beta_{k+1}+\cdots+\beta_{m}\right)=S_{k}(\beta) \leq u_{k}$ since $\beta \in W$. Thus, $\alpha_{j}<\beta_{j}$ with $S_{k}(\alpha)<u_{k}$ for $j \leq k<i$, so $j$ is feasible under Definition~(\ref{def:feasible})(i).

\textbf{(c)} Now suppose $1<i<m$. We show that if there is no feasible $j$ under Definition~(\ref{def:feasible})(i) then there exists a feasible $j$ under Definition~(\ref{def:feasible})(ii), and conversely. Suppose there is no feasible $j<i$. Then for each $1 \leq k<i$, either ($\dagger) S_{k}(\alpha)=u_{k}$ or $(\ddagger) S_{k}(\alpha)<u_{k}$ but $\alpha_{k} \geq \beta_{k}$. We have the following six consequences (C1)-(C6).

\vspace{.1in}

 \noindent \textbf{(C1)} $S_{k}(\alpha) \geq S_{k}(\beta)$ for each $1 \leq k<i$.
 
\begin{adjustwidth}{\parindent}{}
By induction on $k$. If ($\dagger)$ holds, then $S_{1}(\alpha)=u_{1} \geq S_{1}(\beta)$ since $\beta \in W$. If ($\ddagger$) holds, then already $\alpha_{1} \geq \beta_{1}$. So assume that $S_{k-1}(\alpha) \geq S_{k-1}(\beta)$. If ($\dagger$) holds, then $S_{k}(\alpha)=u_{k} \geq S_{k}(\beta)$ as before, while if $(\ddagger)$ holds, then $S_{k}(\alpha)=S_{k-1}(\alpha)+\alpha_{k} \geq S_{k-1}(\beta)+\alpha_{k} \geq S_{k-1}(\beta)+\beta_{k}=S_{k}(\beta)$. The first inequality is by the inductive hypothesis and the second is by $(\ddagger)$.
\end{adjustwidth}

 \noindent \textbf{(C2)} $S_{i}(\alpha)>S_{i}(\beta).$

\begin{adjustwidth}{\parindent}{}
This is because $S_{i}(\alpha)=S_{i-1}(\alpha)+\alpha_{i} \geq S_{i-1}(\beta)+\alpha_{i}>S_{i-1}(\beta)+\beta_{i}=S_{i}(\beta)$ by (C1) and the assumption $\alpha_{i}>\beta_{i}$. 
\end{adjustwidth}

 \noindent \textbf{(C3)} $S_{i}(\alpha)>l_{i}$ or equivalently, $S_{i}(\alpha)-1 \geq l_{i}$.

\begin{adjustwidth}{\parindent}{}
By $(\mathrm{C} 2)$, since $S_{i}(\beta) \geq l_{i}$.
\end{adjustwidth}

 \noindent \textbf{(C4)} It follows that there must exist an index $j>i$ such that $\alpha_{j}<\beta_{j}$.

\begin{adjustwidth}{\parindent}{}
For if $\alpha_{j} \geq \beta_{j}$ for $k>i$, then $\alpha_{i+1}+\cdots+\alpha_{m} \geq \beta_{i+1}+\cdots+\beta_{m}$ which implies the contradiction $S_{i}(\alpha)=n-\left(\alpha_{i+1}+\cdots+\alpha_{m}\right) \leq n-\left(\beta_{i+1}+\cdots+\beta_{m}\right)=S_{i}(\beta)$ by $(\mathrm{C} 2)$. Thus we can and do take $j$ to be the least such index, in which case $\alpha_{k} \geq \beta_{k}$ for $i<k<j$.
\end{adjustwidth}

\noindent  \textbf{(C5)} $S_{k}(\alpha)>S_{k}(\beta)$ for each $i \leq k<j=\min \left\{k>i: \alpha_{k}<\beta_{k}\right\}$.

\begin{adjustwidth}{\parindent}{}
By induction on $k$. The initial case $k=i$ is (C2). Assuming $S_{k-1}(\alpha)>S_{k-1}(\beta)$, $S_{k}(\alpha)=S_{k-1}(\alpha)+\alpha_{k}>S_{k-1}(\beta)+\beta_{k}=S_{k}(\beta)$ by the inductive hypothesis and the last assertion of (C4).
\end{adjustwidth}

 \begin{adjustwidth}{\parindent}{}
\noindent \hspace{-\parindent} \textbf{(C6) }Since $S_{k}(\beta) \geq  l_{k}$, it follows that $S_{k}(\alpha)>l_{k}$ or equivalently, $S_{k}(\alpha)-1 \geq l_{k}$ for each $i \leq k<j$.
\end{adjustwidth}

 We have thus established $j>i$ is feasible under Definition~(\ref{def:feasible})(ii). The proof that there exists a feasible $j<i$ under Definition~(\ref{def:feasible})(i) when there is no feasible $j>i$ under Definition~(\ref{def:feasible})(ii) is  entirely analogous by symmetry and will be omitted for brevity.
\end{proof}
Next we prove that the feasible indices identified in Lemma~\ref{lem:feasibleIndex} satisfy $\alpha^{\prime}=\alpha-e_{i}+e_{j} \in W$, as required for showing $W$ is $M$-convex.

\begin{lemma}
\label{lem:f2}
For feasible $j>i$ with $j=\min \left\{k>i: \alpha_{k}<\beta_{k}\right\}, \alpha^{\prime}=\alpha-e_{i}+e_{j} \in W$. The same holds for feasible $j<i$ with $j=\max \left\{k<i: \alpha_{k}<\beta_{k}\right\}$.
\end{lemma}

\noindent {\bf Proof}\hspace{.1in}
We verify the requisite inequalities $l_{k} \leq S_{k}\left(\alpha^{\prime}\right) \leq u_{k}$ for $\alpha^{\prime}$ when $j>i$. The verification when $j<i$ is entirely analogous and will be omitted. We have six further consequences.

\noindent \textbf{(C7)} $l_{k} \leq S_{k}\left(\alpha^{\prime}\right) \leq u_{k}$ for $1 \leq k<i$.

\begin{adjustwidth}{\parindent}{}
By definition of $\alpha^{\prime}, \alpha_{k}^{\prime}=\alpha_{k}$ for $1 \leq k<i$, so $S_{k}\left(\alpha^{\prime}\right)=S_{k}(\alpha)$ for such $k$.
\end{adjustwidth}

\noindent \textbf{(C8)} $l_{i} \leq S_{i}\left(\alpha^{\prime}\right) \leq u_{i}$.

\begin{adjustwidth}{\parindent}{}
For $S_{i}\left(\alpha^{\prime}\right)=S_{i-1}\left(\alpha^{\prime}\right)+\alpha_{i}^{\prime}=S_{i-1}(\alpha)+\alpha_{i}-1=S_{i}(\alpha)-1 \geq l_{i} \quad$ by $\quad$ (C7) and (C3). Obviously, $S_{i}\left(\alpha^{\prime}\right) \leq u_{i}$ since $S_{i}(\alpha) \leq u_{i}$.
\end{adjustwidth}

 \noindent\textbf{(C9)} $S_{k}\left(\alpha^{\prime}\right)=S_{k}(\alpha)-1$ for $i<k<j$. 
 
 \begin{adjustwidth}{\parindent}{}
By induction on $k$. The case $k=i$ follows from the first line of the proof of (C8). Assuming $S_{k-1}\left(\alpha^{\prime}\right)=S_{k-1}(\alpha)-1, S_{k}\left(\alpha^{\prime}\right)=S_{k-1}\left(\alpha^{\prime}\right)+\alpha_{k}^{\prime}=S_{k-1}(\alpha)-1+\alpha_{k}=S_{k}(\alpha)-1$ by the inductive hypothesis and the definition of $\alpha^{\prime}$ wherein only elements $i$ and $j$ differ from those of $\alpha$.
\end{adjustwidth}

\noindent \textbf{(C10)} $l_{k} \leq S_{k}\left(\alpha^{\prime}\right) \leq u_{k}$ for $i<k<j$.

\begin{adjustwidth}{\parindent}{}
For from (C9) and (C6), $S_{k}\left(\alpha^{\prime}\right)=S_{k}(\alpha)-1>l_{k}-1$, whence $S_{k}\left(\alpha^{\prime}\right) \geq l_{k}$.
Obviously, $S_{k}\left(\alpha^{\prime}\right) \leq u_{k}$ by (C9) since $S_{k}(\alpha) \leq u_{k}$.
\end{adjustwidth}

\noindent \textbf{(C11)} $S_{j}\left(\alpha^{\prime}\right)=S_{j}(\alpha)$.

\begin{adjustwidth}{\parindent}{}
For $S_{j}\left(\alpha^{\prime}\right)=S_{j-1}\left(\alpha^{\prime}\right)+\alpha_{j}^{\prime}=\left\{S_{j-1}(\alpha)-1\right\}+\left(\alpha_{j}+1\right)=S_{j}(\alpha)$.
\end{adjustwidth}

\noindent \textbf{(C12)} If $j<m-1$, then we also have $S_{k}\left(\alpha^{\prime}\right)=S_{k}(\alpha)$ for each $j \leq k<m$.

\begin{adjustwidth}{\parindent}{}
By definition of $\alpha^{\prime}, \quad \alpha_{k}^{\prime}=\alpha_{k}$ for $j<k \leq m$, and since $S_{j}\left(\alpha^{\prime}\right)=S_{j}(\alpha)$ by (C11), we have $S_{k}\left(\alpha^{\prime}\right)=S_{k}(\alpha)$ for each $j<k<m$. Thus with (C11), $l_{k} \leq S_{k}\left(\alpha^{\prime}\right) \leq u_{k}$ for $j \leq k<m$.
\end{adjustwidth}

\vspace{.1in}

Therefore, $\alpha^{\prime} \in W$ with $\alpha_{j}<\beta_{j}$, so $W$ is $M$-convex.  This concludes the proofs of Lemma~\ref{lem:f2} and Theorem~\ref{thm:partialSUm}.
\end{proof}
\vspace{-.1in}
\begin{remark}\normalfont
    Note that by $(\mathrm{C} 3), S_{i}(\alpha)>l_{i}$ must hold under the assumption that no feasible $j<i$ exists, but $S_{i}(\alpha)=l_{i}$ is allowed when there are feasible $j<i$ [as there must be by Lemma~\ref{lem:f2}, since no $j>i$ is feasible if $S_{i}(\alpha)=l_{i}$ under Definition~(\ref{def:feasible})(ii)]. Similarly, $S_{j}(\alpha)<u_{j}$ must hold if there are no feasible $j>i$, but $S_{j}(\alpha)=u_{j}$ is allowed when there are feasible $j>i$ [as there must be by Lemma~\ref{lem:f2}, since $j<i$ is not feasible if $S_{j}(\alpha)=u_{j}$ under Definition~(\ref{def:feasible})(i)].
\end{remark}

\begin{remark} \normalfont
In the proof of case (c) of Lemma~\ref{lem:feasibleIndex}, under the assumption of no feasible $j<i$, we did not need the inequality $S_{j}(\alpha)>l_{j}$, which isn't even necessarily true. Even if $S_{j}(\alpha)=l_{j}$, as $k$ increases from $i$, the partial sums $S_{k}\left(\alpha^{\prime}\right)$ always ``adjust up'' to the constraint $l_{j} \leq S_{j}\left(\alpha^{\prime}\right) \leq u_{j}$ with $\alpha_{j}^{\prime}=\alpha_{j}+1$. Similarly, under the assumption of no feasible $j>i$, we do not need the inequality $S_{i}(\alpha)>u_{i}$, which isn't necessarily true. Even if $S_{i}(\alpha)=u_{i}$, as $k$ increases from $j$, the partial sums $S_{k}\left(\alpha^{\prime}\right)$ always ``adjust down'' to the constraint $l_{i} \leq S_{i}\left(\alpha^{\prime}\right) \leq u_{i}$ with $\alpha_{i}^{\prime}=\alpha_{i}-1$.
\end{remark}
\section{Acknowledgment}
The authors wish to thank Cynthia Vinzant for very helpful conversations on complete log-concavity and Lorentzian polynomials.

\bibliography{logcon}
\end{document}